\theoremstyle{plain}
\newtheorem{theorem}{Theorem}[section]
\newtheorem{lemma}[theorem]{Lemma}
\theoremstyle{definition}
\newtheorem{remark}[theorem]{Remark}
\numberwithin{equation}{section}
\def\be{\begin{equation}}
\def\ee{\end{equation}}
\begin{document}

\title{Refined Asymptotics for Minimal Graphs in the Hyperbolic Space }

\author{Weiming Shen}
\address{Beijing International Center for Mathematical Research\\
Peking University\\
Beijing, 100871, China}  \email{wmshen@pku.edu.cn}
\author{Yue Wang}
\address{School of Mathematical Sciences\\
Peking University\\
Beijing, 100871, China}
\address{Beijing International Center for Mathematical Research\\
Peking University\\
Beijing, 100871, China}
\email{yuewang37@pku.edu.cn}

\begin{abstract}
We study the boundary behaviors of solutions $f$ to the Dirichlet problem
for minimal graphs in the hyperbolic space with singular asymptotic
boundaries and characterize the boundary behaviors of $f$ at the points strictly
located in the tangent cones at the singular points on the boundary. For $n=2$,
we also obtain a refined estimate of
$f$.
\end{abstract}

\thanks{Authors acknowledge the support of NSFC Grant
11571019.}
\maketitle
\section{Introduction}

Assume that $\Omega\subset \mathbb{R}^{n}$ is a bounded domain.
Lin \cite{Lin1989Invent} studied the Dirichlet problem of the form
\begin{align}\label{eq-MinGmain}
\begin{split}
\Delta f-\frac{f_{i}f_{j}}{1+|\nabla f|^{2}}f_{ij}+\frac{n}{f}&=0 \quad\text{in }\Omega,\\
f&>0 \quad\text{in }  \Omega,\\ f&=0 \quad \text{on }\partial\Omega.
\end{split}
\end{align}
Geometrically, the graph of $f$ is a minimal surface in $\mathbb H^{n+1}$ with its asymptotic boundary at
infinity given by  $\partial \Omega$. For $n=2$,
\eqref{eq-MinGmain} also appears in the study of the Chaplygin gas. See
\cite{Serre2009} for details.

The existence of a unique solution $f\in C(\bar{\Omega})\bigcap C^{\infty}(\Omega)$ to \eqref{eq-MinGmain}
 was shown in
\cite{Lin1989Invent} with the assumption that
$\Omega\subset \mathbb{R}^{n}$
is a $C^2$-domain and its boundary has nonnegative mean curvature $H_{\partial\Omega}$
with respect to the inward
normal direction of $\partial\Omega$.
Concerning the higher global regularity, Lin proved if $H_{\partial\Omega} > 0$, then
$f\in C^{1/2}(\bar{\Omega})$.
In \cite{HanShenWang1}, Han and we proved that under the condition $H_{\partial\Omega} \geq 0$,
$f\in  C^{\frac{1}{n+1}}(\bar{\Omega}).$
 Han and we also proved in \cite{HanShenWang1} that \eqref{eq-MinGmain} admits a unique solution
$f\in C^{1/2}(\bar{\Omega})\bigcap C^{\infty}(\Omega)$
under the assumption that $\Omega$ is the intersection of finitely many bounded convex $C^{2}$-domains $\Omega_i$
with $H_{\partial\Omega_i} > 0$.

Concerning asymptotic behaviors of solutions to \eqref{eq-MinGmain},
when $\Omega$ is sufficiently smooth, the expansion near the boundary of solution to the Dirichlet problem
for minimal graphs in the hyperbolic space is shown in \cite{HanJiang2014}.
When $\Omega$ is singular, Han and
we  \cite{HanShenWang2} studied the asymptotic behaviors of solution $f$ on $ \Omega\subset\mathbb R^{2}$
whose boundary are piecewise regular
with positive curvatures and derived an estimate of $f$ by comparing it with the corresponding solutions
in the intersections of interior tangent balls.

The boundary geometry has great effects on behaviors of solutions to \eqref{eq-MinGmain}.
When the boundary is regular, asymptotic behaviors are much clearer.
For example, if $\Omega$ is a bounded $C^{2,\alpha}$-domain with $H_{\partial\Omega}>0$,
for some $\alpha\in (0,1)$, then
\begin{equation}\label{eq-C^2BasicExpansion}
\bigg|\left(\frac{H_{\partial\Omega}}{2d}\right)^{\frac12}f-1\bigg|\le Cd^{\frac{\alpha}2},\end{equation}
where $d$ is the distance function to $\partial\Omega$. Another problem involving positive boundary curvatures
is discussed by Jian and Wang \cite{JianWang2013JDG}.
However, difficulties arise when
we study asymptotic behaviors of solutions $f$ in domains with singularity.
In the general case of singular boundary, it is natural to compare solutions with the
corresponding solutions in the tangent cones. This is the approach Han and the first author adopted in the study
of the Liouville
equation in \cite{HanShen1} and the Loewner-Nirenberg problem in \cite{HanShen2}.
However, for  \eqref{eq-MinGmain} in domains with singularity,
in light of \eqref{eq-C^2BasicExpansion}, we should abandon this approach,
since the boundaries of tangent cones bounded by finitely many hyperplanes  have zero mean curvature
wherever they are smooth. In a sense, we need to preserve the positivity of the boundary mean curvature.
For $n=2$,  in domains whose boundaries are piecewise regular
with positive curvatures, Han and we \cite{HanShenWang2} studied the asymptotic behaviors of $f$
to \eqref{eq-MinGmain} and proved that $f$ can be well approximated by the corresponding solutions
in the intersections of interior tangent balls.

In this paper, we continue our study of the boundary behaviors of solutions $f$ to \eqref{eq-MinGmain}
in general convex domains with singular asymptotic
boundaries. We characterize the boundary behaviors of $f$ at the points
strictly located in the tangent cones at the singular points on the boundary and
prove that $f$ at these points can be well approximated by the
corresponding solutions in tangent cones.
For $n=2$, we also obtain a refined estimate of $f$.
From the results in this paper and also the results in \cite{HanShenWang1} and \cite{HanShenWang2},
we can see the degeneracy of \eqref{eq-MinGmain} has
great effect on the boundary behaviors of the solution, which we can compare with the results in \cite{ME1988}.

Our first main theorem in this paper is the following result.

\begin{theorem}\label{main theorem1}
Let $\Omega$ be a bounded convex domain in $\mathbb R^n$ and, for some
$x_0\in\partial\Omega$ and $R>0$,
$\partial\Omega\cap B_R(x_0)$ consist of $k$ $C^{1,1}$-hypersurfaces
$S_i,\,i=1,\cdots,k,$ with the angle between any two of the tangent planes at $x_0$ less than $\pi$.
Suppose  $f\in C(\bar\Omega)\cap C^\infty (\Omega)$ is the solution of
\eqref{eq-MinGmain} in $\Omega$ and $f_V$ is the corresponding solution in the tangent cone $V$
of $\Omega$ at $x_0$. Then, for any $\delta>0$ and any $x\in \Omega$ close to $x_0$,
with $ \text{dist}(x, \partial \Omega) \geq \delta |x-x_0|$,
\begin{align}\label{n/n+1}
|f(x)-f_V(x)| \leq Cf(x)|x-x_0|,
\end{align}
where $C$ is some constant depending only on $\delta$ and the geometry of $\partial \Omega$ near $x_0.$
\end{theorem}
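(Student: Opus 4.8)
The plan is to rescale at $x_0$ so that the tangent cone $V$ becomes the limit of the (slightly curved) rescaled domains, and then to squeeze $f$ between $f_V$ and a barrier built by pulling $f_V$ back through a $C^2$‑small straightening of the domain. Normalize $x_0=0$. For $\lambda\in(0,\lambda_0)$ set $\Omega_\lambda=\lambda^{-1}\Omega$ and $f_\lambda(y)=\lambda^{-1}f(\lambda y)$; the operator in \eqref{eq-MinGmain} is invariant under $f\mapsto\lambda^{-1}f(\lambda\,\cdot)$, and since $V$ is a cone, $1$‑homogeneity gives $\lambda^{-1}f_V(\lambda\,\cdot)=f_V$, so taking $\lambda=|x-x_0|$ shows that \eqref{n/n+1} is equivalent to the scale‑free estimate $|f_\lambda-f_V|\le C\lambda$ on $\{\,|y|=1,\ \text{dist}(y,\partial\Omega_\lambda)\ge\delta\,\}$, uniformly for small $\lambda$ (on this set $f_\lambda$ is trapped between two positive constants, so the factor $f(x)$ in \eqref{n/n+1} is harmless). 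Convexity of $\Omega$ forces $\Omega\subseteq V$, hence $\Omega_\lambda\subseteq V$; since $f_V$ restricted to $\Omega_\lambda$ is a supersolution of \eqref{eq-MinGmain} with $f_V\ge0=f_\lambda$ on $\partial\Omega_\lambda$, the comparison principle for \eqref{eq-MinGmain} (whose zeroth–order term $n/f$ is monotone in the favourable direction) gives $f_\lambda\le f_V$ on $\Omega_\lambda$. This already yields the upper half of \eqref{n/n+1}, with no error term.

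It remains to prove the matching lower bound $f_\lambda\ge f_V-C\lambda$ on the interior region. The geometric input is the $C^{1,1}$ regularity: each $S_i$, written as a graph over its tangent plane $T_i$ by a function vanishing to second order at $0$, lies within $C|x|^2$ of $T_i$, so on any fixed ball $B_R$ the boundary $\partial\Omega_\lambda$ is the graph over $\partial V$ of a function of size $O(\lambda)$; consequently there is a diffeomorphism $\Psi_\lambda$ of a neighbourhood of $\overline{\Omega_\lambda\cap B_R}$ carrying $\partial\Omega_\lambda$ onto $\partial V$ with $\|\Psi_\lambda-\mathrm{id}\|_{C^2}\le C\lambda$. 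Put $u_\lambda=f_V\circ\Psi_\lambda$, so that $u_\lambda=0$ on $\partial\Omega_\lambda\cap B_R$. Using that $f_V$ is $1$‑homogeneous — hence $f_V\sim|y|$, $|\nabla f_V|\sim1$, $|\nabla^2f_V|\sim|y|^{-1}$ away from $\partial V$ — one checks that the change‑of‑variables error produced by inserting $u_\lambda$ into the operator of \eqref{eq-MinGmain} is $O(\lambda)$, i.e.\ small relative to the natural size $n/u_\lambda\sim|y|^{-1}$ of the zeroth–order term. One then corrects $u_\lambda$ to a genuine subsolution $\underline f_\lambda=u_\lambda-w_\lambda$, where $w_\lambda\ge0$ is a supersolution of the linearisation of \eqref{eq-MinGmain} about $u_\lambda$ with right‑hand side of size $\lambda$, together with a boundary layer absorbing the error of $u_\lambda$ near $\partial\Omega_\lambda$. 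In the interior region $w_\lambda$ has size $O(\lambda)$, so comparison of $\underline f_\lambda$ with $f_\lambda$ on $\Omega_\lambda\cap B_R$ gives $f_\lambda\ge f_V-C\lambda$ there, \emph{provided} we also know $\underline f_\lambda\le f_\lambda$ on the outer part $\partial B_R\cap\Omega_\lambda$.

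I expect the two delicate points to be exactly where the degeneracy of \eqref{eq-MinGmain} at the flat faces of $\partial V$ enters. First, near such a face $f_V$ (and hence $u_\lambda$) vanishes like $d^{1/(n+1)}$ — the exponent isolated by analysing the scalar ODE obtained from \eqref{eq-MinGmain} by freezing the tangential variables — so the linearised operator genuinely degenerates and the classical Hopf lemma fails; the boundary layer of $w_\lambda$ must be built by hand from this explicit vanishing rate rather than from a generic barrier. Second, $f_\lambda$ on $\partial B_R\cap\Omega_\lambda$ is not a priori close to $f_V$ without circularity; one removes this by first establishing the qualitative convergence $f_\lambda\to f_V$ uniformly on $\overline{\Omega_\lambda\cap B_R}$ — from interior estimates together with uniform $C^{1/(n+1)}$ regularity up to the boundary and the uniqueness of the solution in the cone $V$ — and then feeding it back through an iteration over dyadic scales $r=2^{-k}r_0$, in which the $O(r^{2})$ gain of the straightened barrier at scale $r$ is played against the decay of the influence of the sphere $\{|x-x_0|=2r\}$ inside $V$. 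Making this iteration converge to the sharp power $r^{2}$ — i.e.\ showing the straightened barrier really beats the transport of error from the outer sphere — is the main obstacle, and is precisely where the \emph{quadratic} structure of the $C^{1,1}$ perturbation (with its controlled second fundamental form), rather than merely its $O(\lambda)$ size, must be used.
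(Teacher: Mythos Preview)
Your upper bound $f\le f_V$ via $\Omega\subset V$ and comparison is exactly what the paper does. The lower bound, however, is obtained in the paper by a completely different and much shorter device that sidesteps every one of the obstacles you flag. Instead of straightening $\partial\Omega_\lambda$ onto $\partial V$ and building barriers by perturbation, the authors place inside $\Omega$ an auxiliary domain $\widetilde\Omega=\bigcap_i B_i^{2L}$, the intersection of balls internally tangent to the faces $S_i$ at $x_0$, and observe that the explicit M\"obius map
\[
T_L(x_1,\ldots,x_{n+1})=\frac{L}{(x_1-L)^2+\cdots+x_{n+1}^2}\bigl(L^2-|x|^2,\,2Lx_2,\ldots,2Lx_{n+1}\bigr)
\]
is a hyperbolic isometry that sends each sphere $\partial B_i^{2L}$ to a hyperplane through the origin. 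Hence $T_L|_{\mathbb R^n}$ maps $\widetilde\Omega$ conformally onto a translate of the tangent cone $V$, and — because $T_L$ is an isometry of $\mathbb H^{n+1}$ — it carries the minimal graph of the solution $\widetilde f$ on $\widetilde\Omega$ \emph{exactly} to the minimal graph of $f_V$ on that cone. Since $JT_L|_{x_0}=\tfrac12 I$ and the next Taylor coefficients are $O(1)$, one reads off $\widetilde f(x)=|x-x_0|\,g_V(\theta)\bigl(1+O(|x-x_0|)\bigr)$ on the non-tangential region, and then $f\ge\widetilde f$ by comparison on $\widetilde\Omega\subset\Omega$. No linearisation, no boundary layer, and no iteration are needed; the $C^{1,1}$ hypothesis is used only to guarantee that the tangent balls $B_i^{2L}$ lie above the corresponding $S_i$.

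Your route could in principle be pushed through, but as written it has genuine gaps. First, the map $\Psi_\lambda$ with $\|\Psi_\lambda-\mathrm{id}\|_{C^2}\le C\lambda$ cannot exist globally on a neighbourhood of $\overline{\Omega_\lambda\cap B_R}$: $\partial V$ has edges (and a vertex) where it is not even $C^1$, so any diffeomorphism carrying $\partial\Omega_\lambda$ onto $\partial V$ must be singular along the preimages of these edges, and the $C^2$ bound fails precisely where you most need it. Second, and relatedly, your own scaling shows $|\nabla^2 f_V|\sim |y|^{-1}$ only away from $\partial V$; as you approach a flat face, $g_V(\theta)$ vanishes like a fractional power and $g_V'$, $g_V''$ blow up, so the ``$O(\lambda)$ change-of-variables error'' is not uniform up to $\partial\Omega_\lambda$ and the corrector $w_\lambda$ cannot be built by a soft argument. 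Third, the dyadic iteration you sketch to handle the outer boundary data is left entirely open (``the main obstacle''), and without it you cannot invoke comparison on $\Omega_\lambda\cap B_R$. The paper's conformal trick trades all of this for a one-line Taylor expansion of $T_L$.
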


Inspired by results in \cite{HanShenWang2}, we
now compare solutions $f$ to \eqref{eq-MinGmain} with those
in the intersections of interior tangent balls and prove a refined estimate.

\begin{theorem}\label{main theorem2}
Let $\Omega$ be a bounded convex domain in $\mathbb R^2$ and, for some
$x_0\in\partial\Omega$ and $R>0$,
$\partial\Omega\cap B_R(x_0)$  consist of two $C^{2,\alpha}$-curves
$\sigma_1$ and $\sigma_2$
intersecting at $x_0$ with an angle $\mu\pi$, for some
constants $\alpha \in(0,1)$ and $ \mu\in (0,1)$. Assume the curvature $\kappa_i$
of $\sigma_i$ at $x_0$ is positive, $i=1,2.$
Suppose  $f\in C(\bar\Omega)\cap C^\infty (\Omega)$ is the solution of
\eqref{eq-MinGmain} in $\Omega$ and $f_*$ is the corresponding solution in
$$\Omega_{x_0,\mu,\kappa_{1},\kappa_{2}}=
B_{\frac{1}{\kappa_{1}}} \left(x_0+\frac{1}{\kappa_{1}}\nu_1\right)
\bigcap B_{\frac{1}{\kappa_{2}}} \left(x_0+\frac{1}{\kappa_{2}}\nu_2\right),$$
where $\nu_1$ and $\nu_2$ are interior unit normal vector to $\sigma_1$ and $\sigma_2$ at $x_0$,
respectively.
Then, for any $ \epsilon \in (0,\alpha)$ and $\delta>0$, there exists a constant
$ \mu({\epsilon,\alpha})>0$, such that, if $\mu \leq \mu({\epsilon,\alpha})$,
then, for any $x\in \Omega$ close to $x_0$, with $ \text{dist}(x, \partial \Omega) \geq \delta |x-x_0|$,
\begin{align}
\label{main-estimate2a} \left|f(x)-f_*(x)\right|\leq Cf(x)|x-x_0|^{1+\alpha-\epsilon},
\end{align}
where $C$ is a positive constant depending only on
$R$, $\mu$, $\alpha$, $\epsilon$, $\delta$ and
the $C^{2,\alpha}$-norms of $\sigma_1$ and $\sigma_2$ in $B_R(x_0)$.
\end{theorem}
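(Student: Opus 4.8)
The plan is to compare $f$ with $f_*$ scale by scale near $x_0$ through a barrier argument built on the linearization of \eqref{eq-MinGmain} at the cone solution $f_V$ of the tangent wedge $V$ of opening angle $\mu\pi$; the smallness of $\mu$ will enter because a narrow wedge makes this linearization rigid near the vertex. All comparisons are confined to the cone region $\mathcal{C}_\delta=\{x\in\Omega:\ \mathrm{dist}(x,\partial\Omega)\ge\delta|x-x_0|\}$, which is forced, since $f$ vanishes on $\partial\Omega$ while $f_*$ in general does not. Write $f_V=|x-x_0|\,h(\theta)$ with $h>0$ on $(0,\mu\pi)$, $h(0)=h(\mu\pi)=0$, where $h$ solves the second-order ODE obtained by substituting $|x-x_0|h(\theta)$ into \eqref{eq-MinGmain}. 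Since $\partial\Omega_*$ consists of two circular arcs meeting at $x_0$ with angle $\mu\pi<\pi$, Theorem \ref{main theorem1} applies both to $f$ in $\Omega$ and to $f_*$ in $\Omega_*$ and yields $|f-f_V|\le Cf|x-x_0|$ and $|f_*-f_V|\le Cf_*|x-x_0|$; hence on $\mathcal{C}_\delta$ near $x_0$ we have $f\sim f_*\sim|x-x_0|$ together with the crude bound $|f-f_*|\le C|x-x_0|^{2}$, which is the starting point of the iteration below. The geometric input is that $\partial\Omega$ and $\partial\Omega_*$ differ by $O(|x-x_0|^{2+\alpha})$ near $x_0$ ($\sigma_i\in C^{2,\alpha}$ with curvature $\kappa_i$ at $x_0$ and $\partial\Omega_*$ osculating $\partial\Omega$ there): for each small $r$ there are intersections of two balls $\Omega_*^{\pm}(r)$, of radii $\kappa_i^{-1}(1\pm Cr^{\alpha})$ with the same tangent planes at $x_0$, such that $\Omega_*^{-}(r)\subseteq\Omega\subseteq\Omega_*^{+}(r)$ inside $B_r(x_0)$.

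The engine is the barrier. Writing $\mathcal{M}$ for the operator on the left side of \eqref{eq-MinGmain} and $L_{f_*}$ for its linearization at $f_*$, one has $\mathcal{M}(f_*(1+w))=L_{f_*}w+(\text{terms at least quadratic in }w\text{ and its derivatives})$, with the remainder negligible once $|x-x_0|$ is small; near $x_0$, $L_{f_*}$ is close to $L_{f_V}$, which is scale-covariant, $L_{f_V}(|x-x_0|^{\gamma}g(\theta))=|x-x_0|^{\gamma-2}\,\mathcal{L}^{\gamma}_{\mu}g(\theta)$ for a Sturm--Liouville operator $\mathcal{L}^{\gamma}_{\mu}$ on $(0,\mu\pi)$ built from $h$. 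The admissible homogeneities are the $\gamma$ that are not indicial roots of $\mathcal{L}^{\gamma}_{\mu}$ with Dirichlet data, and the first such root exceeding $1$, say $\gamma_1(\mu)$, tends to $+\infty$ as $\mu\to0$, because the eigenvalues of the one-dimensional Dirichlet Laplacian on $(0,\mu\pi)$ grow like $\mu^{-2}$; we choose $\mu(\epsilon,\alpha)$ so that $\mu\le\mu(\epsilon,\alpha)$ forces $\gamma_1(\mu)>2+\alpha-\epsilon$. Then, for any exponent $\sigma$ slightly larger than $1+\alpha-\epsilon$ and still below $\gamma_1(\mu)-1$, one builds $w=c|x-x_0|^{\sigma}g(\theta)$ with $g>0$ on the angular range met by $\mathcal{C}_\delta$ and $\pm L_{f_V}w$ of the appropriate sign there, so that $f_*(1-w)$ and $f_*(1+w)$ bracket $f$ from below and above on $\mathcal{C}_\delta\cap B_\rho$ via the comparison principle for \eqref{eq-MinGmain}, for $\rho$ small. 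The relative size $|x-x_0|^{\sigma}$ of $w$ is dictated by the need to dominate, through the sandwich $\Omega_*^{-}(r)\subseteq\Omega\subseteq\Omega_*^{+}(r)$ and the domain monotonicity of solutions of \eqref{eq-MinGmain}, the $O(|x-x_0|^{2+\alpha})$ boundary mismatch; this is exactly where the matching of the curvatures $\kappa_i$ is essential, as it removes the $O(|x-x_0|^{2})$ discrepancy one would otherwise face, and where \eqref{eq-C^2BasicExpansion}-type boundary barriers are invoked near the smooth arcs $\sigma_i$.

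Given the barrier, \eqref{main-estimate2a} follows by a dyadic iteration on the annuli $(B_{\rho_k}\setminus B_{\rho_{k+1}})\cap\mathcal{C}_\delta$, $\rho_k=2^{-k}\rho_0$ for a small fixed $\rho_0$. Assume inductively that $|f-f_*|\le A_k f|x-x_0|^{1+\alpha-\epsilon}$ on $\partial B_{\rho_k}\cap\mathcal{C}_\delta$ and on the lateral part $\{\mathrm{dist}(\cdot,\partial\Omega)=\delta|\cdot-x_0|\}\cap B_{\rho_k}$ (the latter controlled directly by Theorem \ref{main theorem1}, since $f\sim f_*\sim|x-x_0|$ there); the base case $k=0$ is the crude bound above, so $A_0$ is finite. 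Comparing $f$ with $f_*(1\pm w)$, where $w$ has relative homogeneity $\sigma$ and size matching the outer data plus the new mismatch at scale $\rho_{k+1}$, the comparison principle propagates the estimate to $\partial B_{\rho_{k+1}}$ with $A_{k+1}\le 2^{-(\sigma-1-\alpha+\epsilon)}A_k+B$, with $B$ uniform in $k$; since $\sigma>1+\alpha-\epsilon$ the contraction factor is $<1$, so $A_k$ stays bounded and $|f-f_*|\le Cf|x-x_0|^{1+\alpha-\epsilon}$ at every scale, which on $\mathcal{C}_\delta$ is \eqref{main-estimate2a}, with $C$ depending on $\rho_0$ and hence on $R,\mu,\alpha,\epsilon,\delta$ and the $C^{2,\alpha}$-norms of $\sigma_1,\sigma_2$.

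The main obstacle is constructing and verifying the barrier: (i) the spectral analysis of $\mathcal{L}^{\gamma}_{\mu}$ proving $\gamma_1(\mu)\to+\infty$ as $\mu\to0$ and fixing the threshold $\mu(\epsilon,\alpha)$; (ii) producing an angular profile $g$ positive on the entire range swept by $\mathcal{C}_\delta$ — not merely near the bisector — that makes $f_*(1\pm w)$ honest sub- and super-solutions of the quasilinear operator $\mathcal{M}$, while controlling the nonlinear remainder and the error in replacing $L_{f_*}$ by $L_{f_V}$; and (iii) matching this interior barrier to the $\sqrt{d}$-type boundary layer of $f$ and $f_*$ along $\sigma_i$, where the curvature matching and \eqref{eq-C^2BasicExpansion}-type expansions must be reconciled with the corner barrier. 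Steps (i) and (ii) are the heart of the matter; once the barrier is available, the rest is comparison-principle bookkeeping and summing a geometric series.
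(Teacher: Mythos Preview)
Your route is genuinely different from the paper's, and as written it has a gap at the lateral boundary of the comparison region.

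The paper does not linearize at the cone solution, does no Sturm--Liouville spectral analysis, and runs no dyadic iteration. Instead it first proves a \emph{localization lemma} (Lemma~\ref{localization}): if two convex domains coincide in $B_{R_0}(x_0)$, their solutions differ on $\mathcal{C}_\delta$ by at most $Cf(x)(|x-x_0|/R_0)^{2+\beta}$, for any prescribed $\beta>0$, provided $\mu$ is small. The smallness of $\mu$ enters through the elementary bound $f\le C_\mu r$ with $C_\mu\le\sqrt{3\mu}$; this alone makes the crude radial function $f-Ar^{3+\beta}$ a subsolution of the full nonlinear operator on $\Omega\cap B_{r_1}$ (no angular profile $g(\theta)$ is needed). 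Theorem~\ref{main theorem2} then follows in a few lines by scaling: for each fixed $x$, the $(1\pm|x-x_0|^{1+\alpha-\epsilon})$-dilates of $\Omega_*$ about $x$ sandwich $\Omega$ inside a ball of radius $R_0\sim|x-x_0|^{(2+\alpha-\epsilon)/(2+\alpha)}$ (this is where the $C^{2,\alpha}$ osculation and the matching curvatures $\kappa_i$ are used), the localization lemma with $2+\beta=(2+\alpha)(1+\alpha-\epsilon)/\epsilon$ absorbs the truncation error, and the maximum principle plus scale invariance of \eqref{eq-MinGmain} give $f_*(1-C|x-x_0|^{1+\alpha-\epsilon})\le f\le f_*(1+C|x-x_0|^{1+\alpha-\epsilon})$ directly.

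The gap in your scheme is the lateral data. You assert that on $\{\mathrm{dist}(\cdot,\partial\Omega)=\delta|\cdot-x_0|\}$ the bound $|f-f_*|\le A_k f\,|x-x_0|^{1+\alpha-\epsilon}$ is ``controlled directly by Theorem~\ref{main theorem1}'', but that theorem only gives $|f-f_*|\le C|x-x_0|^2$ there. Since $f\sim|x-x_0|$, matching this to $A_k f\,|x-x_0|^{1+\alpha-\epsilon}$ on the lateral cone forces $A_k\gtrsim r^{-(\alpha-\epsilon)}$ for $r$ ranging down to $0$, so the forcing term $B$ in your recursion $A_{k+1}\le 2^{-(\sigma-1-\alpha+\epsilon)}A_k+B$ is not uniform in $k$ and the iteration does not close. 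Extending the barrier $f_*(1\pm w)$ all the way to $\partial\Omega$ would avoid this, but $f_*$ does not vanish on $\partial\Omega$, so one is thrown back onto the boundary-layer matching you flag in your point~(iii) and leave open. The paper sidesteps the issue entirely: in the localization lemma the comparison is between $f$ and the solution $g$ on $\Omega\cap B_1$, both of which vanish on $\partial\Omega\cap B_1$, so the subsolution $f-Ar^{3+\beta}$ runs right up to the boundary and no lateral data is ever needed.
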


The paper is organized as following. In Section \ref{sec-Existence},
we study the boundary behaviors of  solutions of \eqref{eq-MinGmain}
in bounded convex domains bounded by finitely many $C^{1,1}$-hypersurfaces
and prove Theorem \ref{main theorem1}.
In Section \ref{refined},
we study $f$ in domains whose boundaries are piecewise regular
with positive curvatures and prove  Theorem \ref{main theorem2}.

\section{Solutions in Convex Domains Bounded by Hypersurfaces}\label{sec-Existence}

In this section, we discuss the boundary behaviors of solutions of \eqref{eq-MinGmain}
in convex domains bounded by several $C^{1,1}$ hypersurfaces.
We prove, at points strictly located in tangent cones defined at singular points on the boundary,
the solutions $f$ are well approximated by the
corresponding solutions in these cones.

First, we discuss \eqref{eq-MinGmain}
in infinite cones and prove the existence and uniqueness of solutions of \eqref{eq-MinGmain} in infinite cones.
Since this part follows \cite{HanShenWang2} closely, we only sketch the proof.

For some constant $\mu \in (0,1)$, define
\begin{align}\label{eq-definition-cone}
\overline{V}_{\mu}&=\{(r,\theta)\mid r\in(0,\infty),\theta\in(0,\mu\pi)\}.
\end{align}
This is an infinite cone in $\mathbb R^{2}$, expressed in polar coordinates.
Then, $V_{\mu}:= \overline{V}_{\mu}\times \mathbb R^{n-2}$ is an infinite cone in $\mathbb R^{n}$.
Our goal is to find a solution $f$ to \eqref{eq-MinGmain} in $\Omega=V_{\mu}$,  whose form is given by
\begin{align}\label{f=rh}
f=rh(\theta),
\end{align}
where $(r,\theta)$ is the polar coordinates in $\mathbb R^2.$
Substituting \eqref{f=rh} in
$\eqref{eq-MinGmain}$, we have
\begin{align}
\label{po-MainEq}
\frac{h''+h}{r}-\frac{h'^{2}(h''+h)}{r(1+h^{2}+h'^{2})}+\frac{n}{rh}&=0.
\end{align}
In view of \eqref{po-MainEq}, we set the operator $ \mathcal{L}$ acting on functions $h=h(\theta)$,
$\theta\in(0,\mu\pi)$, by
\begin{align}
 \mathcal{L} h&=h(1+h^2)(h''+h)+n(1+h^{2}+h'^{2}).
\end{align}
First, we construct supersolutions of $ \mathcal{L}$.

\begin{lemma}\label{lemma-supersol}
For some constant $\mu\in (0,1)$,
there exist constants  $A>0$, $B\ge 0$, $\alpha\in[ n,+\infty)$ and $\beta\in (0,1)$
such that
\begin{align}
 \mathcal{L}\left( A(\sin\frac{\theta}{\mu})^{\frac{1}{1+\alpha}}+ B(\sin\frac{\theta}{\mu})^{\frac{1}{1+\beta}}\right)
&\leq 0\quad\text{on }(0,\mu\pi).
\end{align}
\end{lemma}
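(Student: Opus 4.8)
The plan is to verify the inequality by a substitution that reduces it to a one‑variable algebraic estimate, in the spirit of the cone construction of \cite{HanShenWang2}. Put $\phi=\sin(\theta/\mu)$, $a=\tfrac1{1+\alpha}$, $b=\tfrac1{1+\beta}$ (so $a<b$, since $\alpha>n\ge2>\beta$), and write the candidate function as $A\tilde h$ with $\tilde h=\phi^{a}+t\phi^{b}$, $t:=B/A\ge0$. From $\phi'=\mu^{-1}\cos(\theta/\mu)$ one gets $\phi''=-\mu^{-2}\phi$ and $(\phi')^{2}=\mu^{-2}(1-\phi^{2})$, hence $(\phi^{p})''+\phi^{p}=\phi^{p-2}P_{p}(\phi)$ with $P_{p}(\phi):=\tfrac{p(p-1)}{\mu^{2}}+\bigl(1-\tfrac{p^{2}}{\mu^{2}}\bigr)\phi^{2}$; in particular $\tilde h,\tilde h',\tilde h''$ are explicit in $\phi$, $\tilde h''+\tilde h=\phi^{a-2}\bigl(P_{a}+t\phi^{b-a}P_{b}\bigr)$, and $(\tilde h''+\tilde h)(1)=\bigl(1-\tfrac a{\mu^{2}}\bigr)+t\bigl(1-\tfrac b{\mu^{2}}\bigr)$. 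Substituting $A\tilde h$ into $\mathcal{L}$ and collecting powers of $A$ (only even powers occur),
\[
\mathcal{L}(A\tilde h)=A^{4}Q(\phi)+A^{2}R(\phi)+n ,
\]
where $Q:=\tilde h^{3}(\tilde h''+\tilde h)$ and $R:=\tilde h(\tilde h''+\tilde h)+n\bigl(\tilde h^{2}+(\tilde h')^{2}\bigr)$ depend only on $\phi\in(0,1]$; by the reflection $\theta\mapsto\mu\pi-\theta$ it suffices to prove $\mathcal{L}(A\tilde h)\le0$ for $\phi\in(0,1]$.

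The whole argument hinges on choosing $\alpha\in(n,\infty)$, $\beta\in(0,1)$ and $t\ge0$ so that $\tilde h''+\tilde h<0$ on $(0,1]$; call this condition $(\star)$. Granting $(\star)$: since $\tilde h>0$ on $(0,1]$ we get $Q<0$ there, and moreover $Q(\phi)\sim\tfrac{a(a-1)}{\mu^{2}}\phi^{4a-2}\to-\infty$ as $\phi\to0^{+}$ (because $P_{a}(0)<0$ and $4a-2<0$), so $-Q$ is continuous and positive on $(0,1]$ and blows up at $0^{+}$, whence $q_{0}:=\inf_{(0,1]}(-Q)>0$. On the other hand, as $\phi\to0^{+}$ the most singular monomial of $R$ is $\tfrac{a((n+1)a-1)}{\mu^{2}}\phi^{2a-2}$ (its coefficient being $\tfrac{a(a-1)}{\mu^{2}}+\tfrac{na^{2}}{\mu^{2}}$), which is \emph{negative} precisely because $\alpha>n$; hence $R\to-\infty$ at $0^{+}$, and being continuous on $(0,1]$ it is bounded above, $R_{0}:=\sup_{(0,1]}R<\infty$. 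Then for every $\phi\in(0,1]$,
\[
\mathcal{L}(A\tilde h)=A^{4}Q+A^{2}R+n\le-q_{0}A^{4}+\max\{R_{0},0\}\,A^{2}+n ,
\]
which is $\le0$ once $A$ is chosen large enough; this proves the lemma with $B=tA$.

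It remains to secure $(\star)$. If $\mu^{2}<\tfrac1{n+1}$ this is easy: take $t=0$ (so $B=0$) and any $a\in(\mu^{2},\tfrac1{n+1})$, i.e.\ $\alpha\in(n,\mu^{-2}-1)$; then $\tilde h''+\tilde h=\phi^{a-2}P_{a}$ and $P_{a}<0$ on $(0,1]$ directly, since $\mu^{2}<a<1$. If $\mu^{2}\ge\tfrac1{n+1}$, no single power works, because $a\le\tfrac1{n+1}\le\mu^{2}$ forces $P_{a}(1)=1-\tfrac a{\mu^{2}}\ge0$; so we take $t>0$ and $\beta$ small, for instance $b=\tfrac{1+\mu^{2}}{2}$ and $\beta=\tfrac{1-\mu^{2}}{1+\mu^{2}}\in(0,1)$, which makes $b>\mu$ and hence $P_{b}<0$ on $(0,1]$. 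Since $a<\mu$, $P_{a}$ is increasing on $[0,1]$ from $P_{a}(0)<0$, so it vanishes at a single $\phi_{\ast}\in(0,1]$; on $(0,\phi_{\ast})$ both $P_{a}$ and $t\phi^{b-a}P_{b}$ are negative, and on the compact set $[\phi_{\ast},1]$ one has $0\le P_{a}\le1-\tfrac a{\mu^{2}}$ while $\phi^{b-a}(-P_{b})$ has a positive lower bound, so $P_{a}+t\phi^{b-a}P_{b}<0$ there as well for $t$ large. This gives $(\star)$.

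The main obstacle is precisely this last step: proving $(\star)$ on \emph{all} of $(0,1]$, uniformly in $\mu\in(0,1)$. For $\mu$ near $1$ the difficulty is real — the contribution $P_{a}$ of the leading power $\phi^{a}$ is nonnegative near $\phi=1$ for every admissible $\alpha$, which is why a single power inevitably fails, so it must be offset there by the corrective term $t\phi^{b}P_{b}$, which is what forces $\beta$ to be small; keeping $P_{a}+t\phi^{b-a}P_{b}$ negative throughout $(0,1]$ is the one genuinely delicate point. Everything else — the explicit forms of $Q$ and $R$, their blow‑up at $\phi=0^{+}$, and the final large choice of $A$ — is routine and parallels \cite{HanShenWang2}.
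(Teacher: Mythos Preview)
Your argument is correct and self-contained. It is built around the same ansatz as the paper's proof (powers of $\sin(\theta/\mu)$, a single power when $\mu$ is small and a two-term combination otherwise), but the organization is genuinely different. The paper, for $\mu\le\tfrac{1}{1+n}$, takes $\alpha=n$ and the specific constant $A=\sqrt{(1+n)\mu}$, and checks by a direct computation that $\mathcal{L}(A\varphi)\le n-n\varphi^{2-2n}\le0$; for $\mu>\tfrac{1}{1+n}$ it fixes $\beta=\min\{\tfrac12(\mu^{-1}-1),\tfrac1{100}\}$, sets $B=CA$, and then follows the near-boundary analysis of \cite{HanShenWang2} (splitting on whether $\sin(\theta/\mu)$ is small), the key new point being the term $A^{2}\tfrac{n-\alpha}{\mu^{2}(1+\alpha)^{2}}\varphi^{-2\alpha}$ with $n-\alpha<0$. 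You instead isolate a single structural condition~$(\star)$, namely $\tilde h''+\tilde h<0$ on $(0,1]$, and then run a uniform soft argument: under $(\star)$ the quartic coefficient $Q=\tilde h^{3}(\tilde h''+\tilde h)$ has $-Q$ bounded below by a positive constant (since $-Q\to+\infty$ at $0^{+}$ and is positive on the compact remainder), while the quadratic coefficient $R$ is bounded above (since its leading singular coefficient $\tfrac{a((n+1)a-1)}{\mu^{2}}$ is negative precisely because $\alpha>n$); a sufficiently large $A$ then finishes. Your verification of $(\star)$ in the two-term case---$P_b<0$ everywhere for $b=\tfrac{1+\mu^{2}}{2}>\mu$, and $P_a$ monotone with a single zero $\phi_\ast$ so that a large $t$ kills the positive part of $P_a$ on $[\phi_\ast,1]$---is clean and explicit. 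What the paper's route buys is concrete constants (in particular $A=\sqrt{(1+n)\mu}$ for small $\mu$, and $\alpha=n$ at the endpoint of the allowed range); what yours buys is a shorter, compactness-based argument that avoids the case split on the size of $\sin(\theta/\mu)$ and does not rely on the reader reconstructing the proof of Lemma~2.1 in \cite{HanShenWang2}.
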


\begin{proof}
For some $\alpha>0$, set
\begin{align}\label{def-x}
\varphi(\theta)=\left(\sin\frac{\theta}{\mu}\right)^{\frac{1}{1+\alpha}}.\end{align}
By differentiating twice, we have
\begin{align*}
\varphi'=\frac{\varphi^{-\alpha}}{1+\alpha}\frac{1}{\mu}\cos\frac{\theta}{\mu},\quad\quad
\varphi''=-\frac{1}{\mu^{2}(1+\alpha)^2}\varphi-\frac{\alpha}{\mu^2(1+\alpha)^2}\varphi^{-1-2\alpha}.
\end{align*}
Then, for some positive constant $A$,
\begin{align*}
 \mathcal{L}(A\varphi)&=A^{2}\varphi(1+A^{2}\varphi^{2})
\left[(1-\frac{1}{\mu^{2}(1+\alpha)^2})\varphi-\frac{\alpha}{\mu^{2}(1+\alpha)^2}\varphi^{-2\alpha-1}\right]\\
&\qquad
+n\left[1+A^{2}\varphi^{2}+\frac{A^2}{\mu^2(1+\alpha)^2}\varphi^{-2\alpha}(1-\varphi^{2+2\alpha})\right].
\end{align*}

We first consider the case $\mu\le \frac{1}{1+n}$.
With $\alpha=n$, we have
\begin{align*}
 \mathcal{L}(A\varphi)&=A^{2}(1-\frac{1}{(1+n)^2\mu^{2}})(1+n)\varphi^{2}+n\\
&\qquad+A^{4}\varphi^{2}\left[(1-\frac{1}{(1+n)^2\mu^{2}})\varphi^{2}
-\frac{n}{(1+n)^2\mu^{2}}\varphi^{-2n}\right].\end{align*}
Hence,
\begin{align*}
 \mathcal{L}(\sqrt{(1+n)\mu} \varphi)\leq n-n\varphi^{2-2n}\leq 0.
\end{align*}

Next, we consider the case $\mu>\frac{1}{1+n}$. Fix an arbitrary constant $\alpha\in(n,+\infty).$
Set
\begin{align*}
    \psi&=(\sin\frac{\theta}{\mu})^{\frac{1}{1+\beta}},\\
    h&=A\phi+B\psi,
\end{align*}
where we take $\beta=\min\{\frac{1}{2}(\frac{1}{\mu}-1),\frac{1}{100}\},$ $A\ge 1$ to be determined, and
set $B=CA$ for a sufficiently large constant $C$ to be determined.
We can compare with the corresponding terms appearing in the proof of Lemma 2.1 in \cite{HanShenWang2}.
Then, we proceed similarly as in the proof of Lemma 2.1 in \cite{HanShenWang2} and we just point out a key difference
that, for some positive constant $\tau$, when $\sin \frac{\theta}{\mu}<\frac{1}{1+\alpha},$
\begin{align*}
 \mathcal{L}(A(\sin\frac{\theta}{\mu})^{\frac{1}{1+\alpha}}+ B(\sin\frac{\theta}{\mu})^{\frac{1}{1+\beta}})
 \leq A^{2}\frac{n-\alpha}{\mu^{2}(1+\alpha)^{2}}\varphi^{-2\alpha}
+n+CA^2\varphi^{-2\alpha+\tau},
\end{align*}
and $n-\alpha<0.$ Hence we obtain the desired result.
\end{proof}

For any $L>0,$ we define an operator $T_L$ by
\begin{align}\label{eq-T}
T_L(x_1,\cdots,x_{n+1})=\frac{L}{(x_1-L)^2+x_2^2+...+x_{n+1}^2}(L^2-|x|^2
,2Lx_2,\cdots,2Lx_{n+1}).
\end{align}
Then $T_L$ is an isometric automorphism in $\mathds{H}^{n+1},$ which maps $(L,0,\cdots,0)$ to infinity.
Restricted to $\mathbb R^{n}\times{\{x_{n+1}=0\}},$ $T_L$ is a conformal transform.
We can obtain \eqref{eq-T} by a combination of some conformal transforms in $\mathbb{R}^{n+1}.$
(See \cite{HanShenWang2}).
It is obvious that
$$\frac{2L^2 x_{n+1}}{(x_1-L)^2+\cdot\cdot\cdot+x_{n+1}^2}\rightarrow 0
\quad\text{as }x_1^2+\cdot\cdot\cdot+x_{n+1}^2\rightarrow\infty.$$

With Lemma \ref{lemma-supersol} and $T_L,$ we prove the existence and  uniqueness
of the solution of \eqref{eq-MinGmain}
in any cone $V\subseteq\mathbb R^{n}$ by following closely
the proof of Theorem 2.3 in \cite{HanShenWang2}.
In fact, any cone $V$ is contained in a cone $\overline{V}$ bounded by two hyperplanes  with a angle less than $\pi$
and the super-solution to \eqref{eq-MinGmain} on $\overline{V}$ is a upper bound for the solution
to \eqref{eq-MinGmain} on $V$ by the maximum principle. From the proof, we also conclude
that the solution in $V$ has the form
\begin{align}\label{cone-sol}
 f_{V}(x)=|x|g_V(\theta),
\end{align}
with $\theta\in \mathds{S}^{n-1}$.

\smallskip

Next, we turn our attention to \eqref{eq-MinGmain} on domains.

Let $\Omega$ be a bounded convex domain and, for some
$x_0\in\partial\Omega$ and $R>0$,
$\partial\Omega\cap B_R(x_0)$ consist of
$k$ $C^{1, 1}$-hypersurfaces $S_i$, $i=1,\cdots,k$, with the angle between any two of the tangent planes
at $x_0$ less than $\pi$. Denote by $V_{x_0}$ the
tangent cone  of $\Omega$ at $x_0.$ Then, $V_{x_0}$ is bounded by $P_i$, the tangent plane
of $S_i$ at $x_0,$ for $i=1,\cdots,k.$ Denote by $\nu_i$ the unit inner normal vector to $P_i$, $i=1,\cdots,k$.

 Assume $x_0\in\partial\Omega$ is the origin 0. By the convexity,
 $\Omega $ is a bounded Lipschitz domain and we can assume
$$\Omega\cap B_{R}(x_{0})
=\{x\in B_{R}: x_{n}>f(x')\},$$
for some Lipschitz function $f$ on $B'_{R}\subset\mathbb R^{n-1}$, with $f(0)=0$.
Then, there exists a finite circular cone
$V_{\theta_{0}}$ such that  $x_{0}$ is its vertex, the $x_n$-axis its axis,
$2\theta_{0}$ the apex angle, $h$ the height, and
$$
V_{\theta_{0}}\subseteq \overline{\Omega},\quad -V_{\theta_{0}}\subseteq \Omega^{c}.
$$
In the following, we denote by $\mu_{x_0}\pi$ the minimal angle among angles
between any two of the tangent planes at $x_0$.

For a positive constant $L$, set
\begin{equation} B_i^L=B_{\frac{L/2}{<\nu_i,e_n>}}\left(x_0+\frac{L/2}{<\nu_i,e_n>}\nu_i\right).\end{equation}
It is easy to see that
\begin{equation}\bigcap_i \partial B_i^L=\{ x_0,q\},\end{equation} where $q=x_0+Le_n$.
Note $<\nu_i, e_n>\,\,\geq  \sin\theta_0.$ Hence,
\begin{align*}
\frac{L}{2}\le \frac{L/2}{<\nu_i,e_n>}
  \le  \frac{L/2}{\sin \theta_0}.
\end{align*}
For some constant $L$ depending only on $R$ and the $C^{1,1}$-norms of $S_i$, for $i=1,\cdots,k,$
we note that each ball
$B_i^L$ is above the corresponding hypersurface $S_i$, although it is not necessarily in $\Omega$.

Now we are ready to
prove Theorem \ref{main theorem1}

\begin{proof}[Proof of Theorem \ref{main theorem1}]
Throughout the proof, we always denote by $C$
some positive constant depending only on $n$, $R$, $\theta_0$, $\delta$, $\mu_{x_0}$, $h$
and the
$C^{1,1}$-norms of hypersurfaces $S_i,\,i=1,\cdots,k$, near $x_0$. Set,
for $L$ sufficiently small,
$$\widetilde\Omega= \bigcap_iB_{i}^{2L}\subset \Omega,$$
where $B_i^{2L}$ is defined above.

Then, for $|x-x_0|$ small with $ \text{dist}(x, \partial \Omega) \geq \delta |x-x_0|$, we have
$$\textrm{dist}(x,\partial V )\geq \delta |x-x_0|,$$
and
$$\textrm{dist}(x,\partial \widetilde{\Omega} )>\frac{\delta}{2}|x-x_0|.$$

For convenience, we rotate the coordinates such that $x_n$-axis above becomes $x_1$-axis and assume
\begin{align*}
  x_0=(-L,0,\cdot\cdot\cdot,0),\quad
  q=(L,0,\cdot\cdot\cdot,0).
\end{align*}
Let $\widetilde f$ be the solution of \eqref{eq-MinGmain}
in $\widetilde\Omega$. The maximum principle implies
\begin{equation}\label{eq-LowerBound}
f\ge \widetilde f\quad\text{in }\widetilde\Omega.\end{equation}
We note that the tangent cone of $\Omega$ at $x_0$ is also the tangent
cone of $\widetilde\Omega$ at $x_0$.
We consider the map $T_{L}$ introduced in \eqref{eq-T}.
Then, $T_{L}|_{\mathbb R^n\times\{0\}}$ maps conformally $\widetilde\Omega$
to an infinite cone $\widetilde{V}$,
which conjugates to $V$, with
\begin{align}\label{same}
\widetilde{V}=V+\frac12{\overrightarrow{x_{0}q}},
\end{align}
and $T_{L}$ maps the minimal graph
$\{(x,\widetilde f(x))\}$ in $\mathds{H}^{n+1}$ to
the minimal graph $\{(y, \widetilde{f}_{\widetilde{V}}(y))\}$ in  $\mathds{H}^{n+1}$.
By \eqref{eq-T} and \eqref{cone-sol}, we have
\begin{align*}
    JT_{L}|_{x_0}=\frac{1}{2}I_{(n+1)\times (n+1)},
\end{align*}
and, for $|x-x_0|$ small and $a\in\{2,\cdot\cdot\cdot,n\},$
\begin{align*}
\bigg|y_1-\frac{1}{2}(x_1+L)\bigg|&\le
C|x-x_{0}|^2,
\\\bigg|y_a-\frac{1}{2}x_a\bigg|&\le C|x_a||x-x_{0}|,\end{align*}
and
\begin{align*}
\bigg|\widetilde{f}_{\widetilde{V}}(y)-\frac{1}{2}\widetilde f(x)\bigg|\le C\widetilde f(x)|x-x_{0}|.
\end{align*}
By (\ref{cone-sol}), when $ \text{dist}(x, \partial \Omega) \geq \delta |x-x_0|,$
$$|\nabla \widetilde{f}_{\widetilde{V}}|\leq C({\widetilde{V}},\delta)$$
and
\begin{align*}
\widetilde{f}_{\widetilde{V}}(y)&\ge \widetilde{f}_{\widetilde{V}}\left(\frac{1}{2}(x_1+L),
\frac{1}{2}x_2,\cdots,\frac{1}{2}x_n\right)
-C({\widetilde{V}},\delta)|x-x_0|^{2}\\&\ge\frac{1}{2}|x-x_0|g_{\widetilde{V}}(\theta)(1-C|x-x_0|),
\end{align*}
where we used the fact that $g_{\widetilde{V}}(\theta)\ge c$,
for some positive constant $c$ depending on ${\widetilde{V}}$ and $\delta$,
when $ \text{dist}(x, \partial \Omega) \geq \delta |x-x_0|$ and $x$ is close to $x_0$,
by noting $g_{\widetilde{V}}(\theta)>0.$
Therefore, combining (\ref{eq-LowerBound}) and the fact $g_{\widetilde{V}}=g_V$
by (\ref{same}), we have \begin{align}
   f(x)\geq |x-x_0| g_V (\theta)(1-C|x-x_0|).
\end{align} Also,
by the maximum principle, we have, for any $x\in \Omega,$
\begin{align}
   f(x)\leq f_V(x)=|x-x_0| g_V (\theta).
\end{align}
This finishes the proof.
\end{proof}

\section{Refined expansion}\label{refined}

In \cite{HanShenWang2}, we studied asymptotic behaviors of $f$ in the hyperbolic space with singular asymptotic
boundaries under the assumption that the boundaries are piecewise regular
with positive curvatures and approximated such solutions by the corresponding solutions
in the intersections of interior tangent balls up to an order $|x|^{\beta}$, with $\beta \in(0,\frac{\alpha}{2}]$.
On the other hand,  Theorem \ref{main theorem1}
demonstrates that, at points strictly located in tangent cones defined at the singular points on the boundary,
the solutions $f$ are well approximated by the
corresponding solutions in these cones up to the order $|x|$.
In light of this, we  expect that the corresponding solutions in the interior tangent balls
should provide a refined estimate over the estimate in  \cite{HanShenWang2}.

To this end, we need a localization lemma which provides more information
on the local properties of asymptotic expansions near singular boundary points up to certain orders.
Compare with Lemma 3.1 in \cite{HanShenWang2}. 

\begin{lemma}\label{localization}
Let $\Omega$ and $\Omega_*$ be two convex domains in $\mathbb R^2$ such that, for
some $x_0\in \partial\Omega$ and some $R_0\in(0,1]$,
$$\Omega\bigcap B_{R_0}(x_0)=\Omega_*\bigcap B_{R_0}(x_0),$$
and that $\partial\Omega\cap B_{R_0}(x_0)$ consists of two $C^{1,1}$-curves
$\sigma_1$, $\sigma_2$ intersecting at $x_0$,
with the angle between the tangent lines of $\sigma_1$ and $\sigma_2$
given by $\mu \pi,$ for some $\mu\in(0,1).$
Suppose that $f$ and $f_*$ are solutions of \eqref{eq-MinGmain}
for $\Omega$ and $\Omega_*$, respectively.
Then, for any $\beta>0$ and $\delta>0$, there exists a constant $\mu(\beta)$ such that,
for any $\mu \in (0,\mu(\beta)]$ and any $x\in \Omega$ close to $x_0$,
with $ \text{dist}(x, \partial \Omega) \geq \delta |x-x_0|$,
\begin{align}\label{u1-u2}
|f(x)-f_*(x)|\leq C f(x)\left(\frac{|x-x_0|}{R_{0}}\right)^{2+\beta},
\end{align}
where $C$ is a positive constant depending only
on $\mu$, $\delta$ and the $C^{1,1}$-norms of $\sigma_1$ and $\sigma_2$ in $B_{R_0}(x_0)$.
\end{lemma}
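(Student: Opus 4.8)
The plan is to reduce \eqref{u1-u2} to a linear maximum-principle argument for the difference and to exhibit a barrier whose decay toward $x_0$ improves without limit as the opening angle $\mu$ shrinks. Since $f$ and $f_*$ both solve \eqref{eq-MinGmain} and the two Dirichlet problems coincide on $\Omega\cap B_{R_0}(x_0)=\Omega_*\cap B_{R_0}(x_0)$, the difference $u:=f-f_*$ satisfies there a \emph{homogeneous} linear equation
\[
a_{ij}u_{ij}+b_iu_i-cu=0\quad\text{in }\Omega\cap B_{R_0}(x_0),\qquad u=0\ \text{on }\partial\Omega\cap B_{R_0}(x_0),
\]
where $a_{ij}=\delta_{ij}-\dfrac{\partial_if\,\partial_jf}{1+|\nabla f|^{2}}$, the drift $b_i$ is built from $f_*,\nabla f,\nabla f_*,D^{2}f_*$, and $c=\dfrac{n}{ff_*}\ge 0$ has a favorable sign. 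On $\{\text{dist}(x,\partial\Omega)\ge\delta'|x-x_0|\}$ this operator is uniformly elliptic with controlled coefficients, by the gradient bound in the proof of Theorem~\ref{main theorem1} applied to both $\Omega$ and $\Omega_*$ (which have the same tangent cone $V$ at $x_0$) together with interior estimates; along $\sigma_1\cup\sigma_2$ it degenerates in the normal direction, with $a_{nn}\to 0$, $c\to+\infty$ and $b$ singular, but there the zeroth-order term works in our favor.

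By Theorem~\ref{main theorem1}, $f_V(1-C|x-x_0|)\le f\le f_V$ and likewise for $f_*$, where $f_V(x)=|x-x_0|\,g_V(\theta)$ is the solution in the cone $V$; by convexity $\Omega\subseteq V$, and by Lemma~\ref{lemma-supersol} and \eqref{cone-sol} the profile $g_V$ is, for $\mu$ small, comparable to $\sqrt{\mu}\,(\sin(\theta/\mu))^{1/(1+n)}$ near the two edges of $V$. Consequently $|u|\le Cf_V|x-x_0|$ on $\Omega\cap B_{R_0}(x_0)$, and $f(x)\ge c(\delta)|x-x_0|$ whenever $\text{dist}(x,\partial\Omega)\ge\delta|x-x_0|$. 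I would then look for a supersolution of the operator above of the form
\[
w(x)=M\Big(\frac{|x-x_0|}{R_0}\Big)^{\gamma}\sin\!\Big(\frac{\theta}{\mu}\Big),
\]
where $\theta$ is the angular coordinate of $V$ and $\gamma=\gamma(\mu)$ is the largest exponent for which the supersolution inequality holds; inserting $w$ and using $f\approx f_*\approx f_V$ near $x_0$ reduces this to a one-dimensional (Sturm--Liouville type) inequality on $(0,\mu\pi)$. The crucial fact is that $\gamma(\mu)\to+\infty$ as $\mu\to 0$: already the Dirichlet operator $-d^{2}/d\theta^{2}$ on an interval of length $\mu\pi$ forces $\gamma^{2}\ge \mu^{-2}$ (indeed $r^{\gamma}\sin(\theta/\mu)$ is superharmonic in the cone for $\gamma<1/\mu$), and the potential $c\sim n/f_V^{2}$ only increases the admissible $\gamma$. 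Hence one may fix $\mu(\beta)$ so that $\gamma(\mu)\ge 3+\beta$ for every $\mu\le\mu(\beta)$.

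Granting such a $w$, the conclusion follows quickly. Normalizing $M$ to be a suitable multiple of $R_0\sup g_V$, one has $w\ge\sup_{\Omega}|u|\ge|u|$ on $\Omega\cap\partial B_{R_0}(x_0)$, while $w\ge 0=u$ on $\partial\Omega\cap B_{R_0}(x_0)$; the maximum principle (using $c\ge0$) then gives $|u|\le w$ throughout $\Omega\cap B_{R_0}(x_0)$. On $\{\text{dist}(x,\partial\Omega)\ge\delta|x-x_0|\}$, using $\gamma\ge 3+\beta$, $|x-x_0|\le R_0\le 1$ and $f(x)\ge c(\delta)|x-x_0|$, we obtain
\[
|u(x)|\le M\Big(\frac{|x-x_0|}{R_0}\Big)^{\gamma}\le M\Big(\frac{|x-x_0|}{R_0}\Big)^{3+\beta}\le C\,f(x)\Big(\frac{|x-x_0|}{R_0}\Big)^{2+\beta},
\]
which is \eqref{u1-u2}.

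The principal obstacle is to carry out the barrier construction for the \emph{actual} linearized operator rather than a model. Near $\sigma_1\cup\sigma_2$ the operator is of Fuchsian type: $a_{nn}$ degenerates, $c$ blows up, and the drift $b_i$ coming from the dependence of $a_{ij}$ on $\nabla f$ is singular, so one must verify that the supersolution inequality for $w$ survives the competition among these terms; here it is natural to work from the divergence form of the linearized hyperbolic minimal-graph equation and to use that $f$ and $f_*$ share the same boundary expansion along $\sigma_i$, so that $u$ is in fact much smaller than $f$ in a full neighborhood of $\sigma_i\cap B_{R_0}(x_0)$, the cone-type superharmonic barrier being needed only in the complementary region. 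One must also check that the $C^{1,1}$ (rather than $C^{2}$) regularity of $\partial\Omega$, which makes $\partial\Omega$ deviate from $\partial V$ by only $O(|x-x_0|^{2})$, contributes errors absorbable by slightly decreasing $\mu(\beta)$ (equivalently, by taking $\gamma$ a fixed fraction below its optimal value). This is precisely the step where I would follow the proof of Lemma~3.1 in \cite{HanShenWang2}, replacing its comparison function with the present, sharper one and re-examining each estimate inside the degenerate boundary layer.
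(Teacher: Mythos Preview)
Your route differs substantially from the paper's. You linearize and seek a cone barrier $w=M\,r^{\gamma}\sin(\theta/\mu)$ for the difference $u=f-f_*$, with $\gamma\to\infty$ driven by the Dirichlet eigenvalue of a thin wedge. The paper never linearizes. It introduces the solution $g$ of \eqref{eq-MinGmain} on the common subdomain $\Omega\cap B_{1}$, so that $f,f_*\ge g$ by the maximum principle, and then proves that $h:=f-Ar^{3+\beta}$ is a \emph{subsolution of the full nonlinear operator} $Q$; hence $g\ge h$, and by symmetry $g\ge f_*-Ar^{3+\beta}$, which gives \eqref{u1-u2}. The mechanism making $Q(h)\ge 0$ is not a wedge eigenvalue but the size of the zeroth–order term: since $f\le C_\mu r$ with $C_\mu\le\sqrt{3\mu}$ for small $\mu$, one has
\[
\frac{n}{f-Ar^{3+\beta}}-\frac{n}{f}\ \ge\ \frac{n}{C_\mu^{2}}\,Ar^{1+\beta},
\]
and this dominates the second–order contribution $|\partial_{ij}(Ar^{3+\beta})|=O(Ar^{1+\beta})$ once $C_\mu$ is small. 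The delicate step is comparing $a_{ij}(h)f_{ij}$ with $a_{ij}(f)f_{ij}$; here the paper uses the \emph{concavity of $f$} (established in \cite{HanShenWang1}) to control the off–diagonal terms, an ingredient your sketch does not invoke.

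The gap in your plan is exactly the one you flag, and the paper's argument shows why it matters. Near $\sigma_1\cup\sigma_2$ the linearized operator has a singular drift $b_i$ built from $D^{2}f_*$ and a vanishing normal coefficient $a_{nn}$, so the model computation $\Delta(r^{\gamma}\sin(\theta/\mu))\le 0$ does not transfer, and the potential $c=n/(ff_*)$, while favorable in sign, competes with $b_i\partial_i w$ in a way you have not estimated. The paper's nonlinear subsolution trick sidesteps this boundary layer entirely: by perturbing $f$ itself rather than comparing $u$ to an external barrier, the leading singular parts of $a_{ij}(h)f_{ij}$ and $a_{ij}(f)f_{ij}$ cancel, and what remains is a pointwise algebraic inequality needing only $|\nabla(Ar^{3+\beta})|\ll 1$ and $D^{2}f\le 0$. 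That is what lets the proof run uniformly up to $\partial\Omega$ without any Fuchsian analysis.
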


\begin{proof}
We note that the equation in \eqref{eq-MinGmain} is invariant under the scaling
$f\mapsto f(R\cdot)/R$. Without loss of generality, we assume $x_0=0$ and $R_0=1$ and prove,
for any $x\in \Omega$ close to $x_0$,
with $ \text{dist}(x, \partial \Omega) \geq \delta |x|$,
\begin{align}\label{u1-u2-equivalent}
|f(x)-f_*(x)|\leq C f(x)|x|^{2+\beta}.
\end{align}

For  any $x\in \Omega\cap B_{r_0}$ with $\text{dist}(x, \partial \Omega) \geq \delta |x|, $
we have
 \begin{align}\label{bd-f}
C_\mu |x|\ge f(x)\ge c |x|,\quad C_\mu |x|\ge f_*(x)\ge c |x|,
\end{align}
where $r_0$ and $c$ are small positive constants obtained by Theorem \ref{main theorem1} and
$C_\mu$ is a positive constant obtained by the maximum principle and \eqref{cone-sol}.
Hence, for any  $r_1\in (0,r_0)$, \eqref{u1-u2-equivalent} holds for
any $x\in \Omega\cap (B_{r_0}\setminus B_{r_1})$,
with $ \text{dist}(x, \partial \Omega) \geq \delta |x|$ by taking $C$ in \eqref{u1-u2-equivalent} large.

Let $g$ be the solution to \eqref{eq-MinGmain} in $\Omega\cap B_{1}$. By the maximum principle, we have
\begin{align}\label{eq-comparison1}f\ge g, \quad f_*\ge g\quad \text{in }\Omega\cap B_{1}.\end{align}
Write $r=|x|$. We claim, there exists a small $r_{1\mu}$ such that
\begin{align}\label{subsolution} g\ge f-Ar^{3+\beta},\quad g\ge f_*-Ar^{3+\beta}
\quad \quad\text{in }\Omega\cap B_{r_{1\mu}}.\end{align}
By combining \eqref{bd-f}, \eqref{eq-comparison1}, and \eqref{subsolution}, we have,
for any $x\in \Omega\cap  B_{r_{1\mu}}$,
with $ \text{dist}(x, \partial \Omega) \geq \delta |x|$,
$$f_*(x)\geq g(x)\geq f(x)(1-C|x|^{2+\beta}),\quad f(x)\geq g(x)\geq f_*(x)(1-C|x|^{2+\beta}).$$
Hence, we obtain \eqref{u1-u2-equivalent} for any $x\in \Omega\cap  B_{r_{1\mu}}$,
with $ \text{dist}(x, \partial \Omega) \geq \delta |x|$.

We now prove the first inequality in \eqref{subsolution}.
First, we consider the boundary condition.
Proceeding as in \cite{HanShenWang2}, we have
\begin{equation}\label{bdry1}
g\geq \bigg(1-\big(\frac{1}{r_{0_\mu}^\alpha}+2\big)r^\alpha\bigg)f \quad\text{in }\Omega\cap B_{r_{0_\mu}},
\end{equation}
and
\begin{align}\label{bdry}
\begin{split}
f&\leq C_\mu r \quad\text{in }\Omega,\\
f&=0\quad\quad\text{on }\partial\Omega,\\
C_\mu&\leq \sqrt{3\mu}\quad\text{for } \mu\leq \frac{1}{3},
\end{split}
\end{align}
where  $r_{0\mu}$ is the small positive constant defined in Lemma 3.1 in \cite{HanShenWang2}
and the subscript $\mu$ indicates its dependence on $\mu.$
In the following, we assume $\mu$ is small. Then by \eqref{bdry},
\begin{align*}
C_\mu\leq 1.
\end{align*}
Next, we require, for some small $r_{1\mu}< r_{0\mu},$
\begin{align*}
f\bigg(1-\big(\frac{1}{r_{0_\mu}^\alpha}+2\big)r^\alpha\bigg) \geq f-Ar^{3+\beta}\quad\text{on }
\Omega\cap \partial B_{r_{1\mu}}.
\end{align*}
To this end, take
\begin{align}\label{A}
  A=C_\mu\bigg(\frac{1}{r_{0_\mu}^\alpha}+2\bigg)r_{1\mu}^{\alpha-2-\beta}.
\end{align}
Combining with (\ref{bdry1}), the boundary condition is satisfied.

Next, set $$h=f-Ar^{3+\beta},$$ and
\begin{align}\label{h}
    Q(h)=\bigg(\delta_{ij}-\frac{h_ih_j}{1+|\nabla h|^2}\bigg)h_{ij}+\frac{n}{h}.
\end{align}
We will prove $Q(h)\geq 0$ in $\Omega\cap B_{r_{1\mu}}$, for the general dimension $n$.
Take $r_{1\mu}$ sufficiently small, with ${r_{1\mu}}\ll{r_{0\mu}}$. We have, for $r\leq r_{1\mu},$
\begin{align}\label{nablaAr}
    |\nabla Ar^{3+\beta}|&\leq A(3+\beta)r^{2+\beta}
    \leq(3+\beta)(\frac{1}{r_{0_\mu}^\alpha}+2)r_{1_\mu}^{\alpha}
    (\frac{r}{r_{1_\mu}})^{2+\beta}\ll 1.
\end{align}
We claim that
\begin{align}\label{claim}
   (\delta_{ij}-\frac{h_ih_j}{1+|\nabla h|^2})f_{ij} \geq
   (\delta_{ij}-\frac{f_if_j}{1+|\nabla f|^2})f_{ij}(1+C |\nabla Ar^{3+\beta}|),
\end{align}
where $C$ is a positive constant independent of $\mu.$ In fact, we have $C=2.1+1.2(n-1)$
from the proof of \eqref{claim}.
Assuming \eqref{claim}, we have, by (\ref{bdry}),
\begin{align}\label{Qh}
\begin{split}
    Q(h)&\geq (\delta_{ij}-\frac{f_if_j}{1+|\nabla f|^2})f_{ij}(1+C |\nabla Ar^{3+\beta}|)\\
    &\qquad+ (\delta_{ij}-\frac{h_ih_j}{1+|\nabla h|^2})\partial_{ij}(-Ar^{3+\beta})+\frac{n}{f-Ar^{3+\beta}}
    \\&\geq \frac{n}{f-Ar^{3+\beta}}-\frac{n}{f}(1+C |\nabla Ar^{3+\beta}|)-n(3+\beta)(2+\beta)Ar^{1+\beta}\\
    &\geq Ar^{1+\beta}\bigg(\frac{n}{C_\mu^2}(1-C_\mu (3+\beta)C)-n(3+\beta)(2+\beta)\bigg).
\end{split}
\end{align}
By (\ref{bdry}), we choose $\mu$ small so that $C_\mu$ is small. Therefore, $Q(h)\geq0.$

Now we prove \eqref{claim}. Note that
$$(\delta_{ij}-\frac{h_ih_j}{1+|\nabla h|^2})f_{ij}\,\,\text{ and }\,\,(\delta_{ij}-\frac{f_if_j}{1+|\nabla f|^2})f_{ij}$$
are invariant under constant orthogonal transforms.
Hence, in a neighborhood of any point $p\in \Omega \cap B_{r_{1_\mu}},$
by a rotation, we can assume $\nabla h(p)=h_1(p)$ and proceed to calculate at $p$ in such coordinates.
Set
$\imath,\jmath\in\{2,\cdot\cdot\cdot,n\}$ and
\begin{align}\label{aij}\begin{split}
    a_{ij}(f)&=\delta_{ij}-\frac{f_if_j}{1+|\nabla f|^2},\\a_{ij}(h)&=\delta_{ij}-\frac{h_ih_j}{1+|\nabla h|^2}.\end{split}
\end{align}
Then,
\begin{align}\label{notation}
   \begin{split}
   a_{11}(h)&=\frac{1}{1+h_1^2},\quad
     a_{\imath\imath}(h) =1, \quad
       a_{ij}(h)=0\quad\text{for } i\neq j,\\
     a_{11}(f)&=\frac{1+\sum_{\imath}f_\imath^2}{1+|\nabla f|^2}
        =\frac{1}{1+|\nabla f|^2}(1+\sum_{\imath}(\partial_\imath Ar^{3+\beta})^2),\\
     a_{\imath\imath}(f) &=1-\frac{f_\imath^2}{1+|\nabla f|^2}
     =1-\frac{(\partial_\imath Ar^{3+\beta})^2}{1+|\nabla f|^2}, \\
       |a_{ij}(f)|&=\left|-\frac{f_if_j}{1+|\nabla f|^2}\right|
        \leq \frac{|\nabla f||\nabla Ar^{3+\beta}|}{1+|\nabla f|^2}
        \quad\text{for } i\neq j,\quad i,j\in\{1,\cdot\cdot\cdot,n\},
   \end{split}
\end{align}
where we used the fact that $ h_\imath=0$ implies
\begin{align}\label{imath}
f_\imath=\partial_\imath Ar^{3+\beta}.
\end{align}
Note $a_{ii}(h),a_{ii}(f)$ are nonnegative by definition. Hence, by (\ref{imath}) again and (\ref{nablaAr}),
\begin{align*}
1+h_1^2&=1+|f_1-\partial_1(Ar^{3+\beta})|^2\\
&=1+|\nabla f|^2+|\partial_1 (Ar^{3+\beta})|^2-2f_1\partial_1 (Ar^{3+\beta})
-\sum_\imath|\partial_\imath (Ar^{3+\beta})|^2\\
&\geq (1+|\nabla f|^2)\cdot\bigg(1-\frac{f_1^2|\nabla (Ar^{3+\beta})|+|\nabla (Ar^{3+\beta})|
+\sum_\imath|\partial_\imath (Ar^{3+\beta})|^2}{1+|\nabla f|^2}\bigg)
\\&\geq (1+|\nabla f|^2)\cdot(1-|\nabla Ar^{3+\beta}|-|\nabla Ar^{3+\beta}|^2)\\
&\geq  \frac{1+|\nabla f|^2}{1+2|\nabla Ar^{3+\beta}|},
\end{align*}
and hence
\begin{align}\label{a11}
\begin{split}
a_{11}(h)=\frac{1}{1+h_1^2}&\leq  \frac{1}{1+|\nabla f|^2}(1+2|\nabla Ar^{3+\beta}|)\\
&=a_{11}(f)\frac{1+2|\nabla Ar^{3+\beta}|}{(1+\sum_{\imath}(\partial_\imath Ar^{3+\beta})^2)}
\\&\leq a_{11}(f)(1+2|\nabla Ar^{3+\beta}|),
\end{split}
\end{align}
and
\begin{align}\label{aimathimath}
a_{\imath\imath}(h)\frac{1}{1+2|\nabla Ar^{3+\beta}|}\leq a_{\imath\imath}(h)(1-|\nabla Ar^{3+\beta}|^2)
\leq a_{\imath\imath}(f).
\end{align}
Next, we consider $a_{ij}(f)f_{ij}$ for $i\neq j.$ Note that $i\neq j$ implies $i\neq 1$ or $j\neq 1$. Without loss of generality, we may assume $j\neq 1.$ By (\ref{notation}) and the concavity of $f$ from \cite{HanShenWang1}, we have
\begin{align}\label{aineqj}\begin{split}
|a_{ij}(f)f_{ij}|&
\leq \frac{|\nabla f||\nabla Ar^{3+\beta}|}{1+|\nabla f|^2}\sqrt{|f_{jj}||f_{ii}|}\\
&\leq \frac{1}{2}(\frac{|\nabla Ar^{3+\beta}|}{1+|\nabla f|^2}|f_{ii}|
+\frac{|\nabla Ar^{3+\beta}||\nabla f|^2}{1+|\nabla f|^2}|f_{jj}|)\\
&\leq\frac{1}{2}|\nabla Ar^{3+\beta}|(\frac{1}{1+|\nabla f|^2}|f_{ii}|+|f_{jj}|)\end{split}
\end{align}
Comparing the coefficients of $|f_{ii}|$ in the last inequality with $a_{ii}(f),$ we have
\begin{align}\label{aijfij}\begin{split}
  \sum_{i,j=1}^n a_{ij}(f)f_{ij}&= \sum_{i=1}^na_{ii}(f)f_{ii}+ \sum_{i\neq j} a_{ij}(f)f_{ij}\\&\leq \sum_{i=1}^na_{ii}(f)f_{ii}
 -\frac{1}{2}\sum_{i\neq j} |\nabla Ar^{3+\beta}|(a_{ii}(f)f_{ii}+a_{jj}(f)f_{jj})\frac{1}{1-|\nabla Ar^{3+\beta}|^2}
  \\&\leq \sum_{i=1}^na_{ii}(f)f_{ii}(1-1.1(n-1)|\nabla Ar^{3+\beta}|).\end{split}
\end{align}
Combining the concavity of $f,$ (\ref{nablaAr}), (\ref{a11}), (\ref{aimathimath}), and (\ref{aijfij}), we have at $p,$
\begin{align*}
  \sum_{i,j=1}^n a_{ij}(h)f_{ij}&=\sum_{i=1}^na_{ii}(h)f_{ii}\\
  &\geq \sum_{i,j=1}^n a_{ij}(f)f_{ij}\frac{1+2|\nabla Ar^{3+\beta}|}{1-1.1(n-1)|\nabla Ar^{3+\beta}|}\\
  &\geq \sum_{i,j=1}^na_{ij}(f)f_{ij}(1+(2.1+1.2(n-1))|\nabla Ar^{3+\beta}|).
\end{align*}
Therefore, we complete the proof of \eqref{claim}, with $C=2.1+1.2(n-1)$.
\end{proof}

\begin{remark}
In the above proof, we can fix a sufficiently small constant $\varepsilon_0$ independent of
$\mu,$ and then take $r_{1_\mu}=r_{0_\mu}\varepsilon_0.$ Hence, $r_{1_\mu}$ depends on $\mu$
continuously as $r_{0_\mu}$ does, which is drawn from \cite{HanShenWang2}. \end{remark}

Now we are ready to prove the refined expansions.

\begin{proof}[Proof of Theorem \ref{main theorem2}]
Fix any point $x\in \Omega$ close to $x_0$, with $ \text{dist}(x, \partial \Omega) \geq \delta |x-x_0|.$
Set
\begin{align*}
\Omega_{x_0,\mu,\kappa_{1},\kappa_{2}}=
B_{\frac{1}{\kappa_{1}}} \left(x_0+\frac{1}{\kappa_{1}}\nu_1\right)
\bigcap B_{\frac{1}{\kappa_{2}}} \left(x_0+\frac{1}{\kappa_{2}}\nu_2\right),
\end{align*}
and
\begin{align*}
\widetilde{\Omega}&=\{x'|x+(1+|x-x_0|^{1+\alpha-\epsilon})^{-1}(x'-x)\in\Omega_{x_0,\mu,\kappa_{1},\kappa_{2}}\}, \\
\widehat{\Omega}&=\{x'|x+(1-|x-x_0|^{1+\alpha-\epsilon})^{-1}(x'-x) \in\Omega_{x_0,\mu,\kappa_{1},\kappa_{2}}\}.
\end{align*}
Let $\widetilde{f}$, $\widehat{f}$, $f_*$ be the solutions of \eqref{eq-MinGmain}
for $\Omega=\widetilde{\Omega}$, $\widehat{\Omega}$, $\Omega_{x_0,\mu,\kappa_{1},\kappa_{2}}$, respectively.
Then,
$$\widetilde{f}(x)=(1+|x-x_0|^{1+\alpha-\epsilon})f_*(x),$$
and
$$\widehat{f}(x)=(1-|x-x_0|^{1+\alpha-\epsilon})f_*(x),$$

Write $\widehat{p}=x+(1-|x-x_0|^{1+\alpha-\epsilon})^{-1}(x_0-x)$.
For $|x-x_0|$ small, it is straightforward to verify
$$\widetilde\Omega'\equiv
\Omega\bigcap B_{C_0|x-x_0|^{\frac{2+\alpha-\epsilon}{2+\alpha}}}(x_0)\subset \widetilde{\Omega},$$
and
$$\widehat\Omega'\equiv
\widehat{\Omega}\bigcap B_{C_0|x-x_0|^{\frac{2+\alpha-\epsilon}{2+\alpha}}}(\widehat{p})\subset \Omega,$$
where $C_0$ is some constant depending only on
$R$, $\mu$, $\alpha$, $\epsilon$, $\delta$, $h$ and
the $C^{2,\alpha}$-norms of $\sigma_1$ and $\sigma_2$ in $B_R(x_0)$.

Let $f'$, $\widehat{f}'$ be the solutions of \eqref{eq-MinGmain} on
$\widetilde\Omega'$ and
$\widehat\Omega',$ respectively.
We choose $2+\beta=\frac{(2+\alpha)(1+\alpha-\epsilon)}{\epsilon}$ in Lemma \ref{localization}.
Then,
$$|f'(x)-f(x)|\leq
Cf(x)\left(\frac{|x-x_0|}{C_0|x-x_0|^{\frac{2+\alpha-\epsilon}{2+\alpha}}}\right)^{\frac{(2+\alpha)(1+\alpha-\epsilon)}{\epsilon}}
\leq Cf(x)|x-x_0|^{1+\alpha-\epsilon},$$
and
$$|\widehat{f}'(x)-\widehat{f}(x)|\leq
C\widehat{f}(x)
\left(\frac{|x-\widehat{p}|}{C_0|x-x_0|^{\frac{2+\alpha-\epsilon}{2+\alpha}}}\right)^{\frac{(2+\alpha)(1+\alpha-\epsilon)}{\epsilon}}
\leq C\widehat{f}(x)|x-x_0|^{1+\alpha-\epsilon},$$
where we took $R_0=C_0|x-x_0|^{\frac{2+\alpha-\epsilon}{2+\alpha}}$ in \eqref{u1-u2}.
By the maximum principle, we have
$$f'(x)\leq\widetilde{f}(x),\quad \widehat{f}'(x)\leq f(x).$$
Hence,
\begin{align*}
 \widehat{f}(x)(1-C|x-x_0|^{1+\alpha-\epsilon})&\leq f(x),\\
  f(x)(1-C|x-x_0|^{1+\alpha-\epsilon})&\leq \widetilde{f}(x).
\end{align*}
Therefore,
\begin{align}\label{eq-comparison}
f_*(x)(1-C|x-x_0|^{1+\alpha-\epsilon})\leq f(x)\leq f_*(x)
   (1+C|x-x_0|^{1+\alpha-\epsilon}).
\end{align}
This completes the proof.
\end{proof}

\end{document}